\documentclass[a4paper,12pt]{article}

\pagestyle{myheadings}

\usepackage[latin1]{inputenc}
\usepackage[T1]{fontenc}
\usepackage[english]{babel}

\usepackage{mathrsfs}
\usepackage{amscd}
\usepackage{amsfonts}
\usepackage{amsmath}
\usepackage{amssymb}
\usepackage{amstext}
\usepackage{amsthm}
\usepackage{amsbsy}

\usepackage{xspace}
\usepackage[all]{xy}
\usepackage{graphicx}
\usepackage{tikz}
\usepackage{url}
\usepackage{latexsym}

\usepackage{graphicx} % support the \includegraphics command and options

% \usepackage[parfill]{parskip} % Activate to begin paragraphs with an empty line rather than an indent

%%% PACKAGES
\usepackage{booktabs} % for much better looking tables
\usepackage{array} % for better arrays (eg matrices) in maths
\usepackage{paralist} % very flexible & customisable lists (eg. enumerate/itemize, etc.)
\usepackage{verbatim} % adds environment for commenting out blocks of text & for better verbatim
\usepackage{subfig} % make it possible to include more than one captioned figure/table in a single float
% These packages are all incorporated in the memoir class to one degree or another...

%%% HEADERS & FOOTERS
\usepackage{fancyhdr} % This should be set AFTER setting up the page geometry
\pagestyle{fancy} % options: empty , plain , fancy
 % customise the layout...
\lhead{}\chead{}\rhead{}
\lfoot{}\cfoot{\thepage}\rfoot{}

%%% SECTION TITLE APPEARANCE
\usepackage{sectsty}
\allsectionsfont{\sffamily\mdseries\upshape} % (See the fntguide.pdf for font help)
% (This matches ConTeXt defaults)

%%% ToC (table of contents) APPEARANCE
\usepackage[nottoc,notlof,notlot]{tocbibind} % Put the bibliography in the ToC
\usepackage[titles,subfigure]{tocloft} % Alter the style of the Table of Contents

 % No bold!

\usepackage[textwidth=100pt,textsize=footnotesize,bordercolor=white,color=blue!30]{todonotes}
\usepackage{hyperref} 

\makeatletter
\newcommand*{\rom}[1]{\expandafter\@slowromancap\romannumeral #1@}
\makeatother

\theoremstyle{definition}

\newtheorem{fact}{fact}

\newtheorem{prop}[fact]{Proposition}

\newtheorem{defini}[fact]{Definition}

\title{A Note on OTM-Realizability and Constructive Set Theories}
\author{Merlin Carl}

\begin{document}

\maketitle

\begin{abstract}
We define an ordinalized version of Kleene's realizability interpretation of intuitionistic logic by replacing Turing machines with Koepke's ordinal Turing machines (OTMs), thus obtaining a notion of realizability applying to arbitrary statements in the language of set theory. We observe that every instance of the axioms of intuitionistic first-order logic are OTM-realizable and consider the question which axioms of Friedman's Intuitionistic Set Theory (IZF) and Aczel's Constructive Set Theory (CZF) are OTM-realizable.

This is an introductory note, and proofs are mostly only sketched or omitted altogether. It will soon be replaced by a more elaborate version.
\end{abstract}

\section{Introduction}

Notions of effectivity, as appearing e.g. in the study of reverse mathematics or Weihrauch reducibility, are usually based on Turing computability and therefore restricted to objects that are either countable or allow for a countable encoding. Recently, there has been some interest in more general notions of effectivity, see e.g. \cite{EffMathUnc} or the much older \cite{EffField}. In \cite{GenEffRed} and \cite{OTMWeihrauch}, generalizations of Weihrauch reducibility were considered that are based on Koepke's ordinal Turing machines \cite{KoOTM} rather than finite Turing machines; further developments in this direction can be found in \cite{OTMRed}. 

Weihrauch reducibility is a notion of (relative) effectivity for $\Pi_{2}$-statements. It then becomes natural to ask for a notion of effectivity that applies to arbitrary set-theoretical statements. In this note, we propose such a notion by generalizing Kleene's realizability interpretation for intuitionistic arithmetic to general set theory via Koepke's Ordinal Turing Machines (OTMs) and explore its connection to two prominent systems of constructive set theory, namely Friedman's IZF \cite{IZF} and Aczel's CZF \cite{CSTBook}.\footnote{Myhill's CST is currently omitted.}

\section{Realizability with OTMs}

We now transfer Kleene's notion of realizability (see \cite{KlRealizability}) to Ordinal Turing Machines. For an account of Ordinal Turing Machines, see Koepke \cite{KoOTM}.

To apply OTMs, which can only process sets of ordinals, to arbitrary sets, we need to encode arbitrary sets as sets of ordinals. The standard technique to do this is the following:
Given a set $x$, form its transitive closure $\text{tc}(x)$, pick an suitable ordinal $\alpha$ and a bijection $f:\alpha\rightarrow x\cup\{\text{tc}(x)$ mapping $0$ to $x$ and let 
$c_{f}(x)=\{p(\iota,\xi):\iota,\xi<\alpha\wedge f(\iota)\in f(\xi)\}$, where $p$ denotes Cantor's pairing function on ordinals.

We can thus use OTMs to compute on on sets by giving codes of sets as the input and having them produce codes for sets as their output. This yields the following notion of OTM-realizability:

\begin{defini}{\label{otm realizable}}
   Let $\phi,\psi$ be $\in$-formulas, and let $P$ be an OTM-program, $\alpha\in\text{On}$, $a_{0},...,a_{n},b_{0},...,b_{m}$ sets with codes $c(a_{0}),...,c(a_{n}),c(b_{0}),...,c(b_{m})$ and $R$, $R^{\prime}$ be finite tuples. In the following, $P$ will always denote an OTM-program and $\alpha$ will denote an ordinal.

\begin{enumerate}
	\item If $\phi$ is quantifier-free, then $(P,\alpha)$ realizes $\phi(a_{0},...,a_{n})$ if and only if $\phi(a_{0},...,a_{n})$ is true (in any transitive sets containing $a_{0},...,a_{n}$). 
	\item $(R,R^{\prime})$ realizes $(\phi(a_{0},...,a_{n})\wedge\psi(b_{0},...,b_{m}))$ if and only if $R$ realizes $\phi(a_{0},...,a_{n})$ and $R^{\prime}$ realizes $\psi(b_{0},...,b_{m})$. 
	\item $(i,R)$ realizes $(\phi(a_{0},...,a_{n})\vee\psi(b_{0},...,b_{m}))$ if and only if $i=0$ and $R$ realizes $\phi(a_{0},...,a_{n})$ or $i=1$ and $R$ realizes $\psi(b_{0},...,b_{m})$. 
	\item $(P,\alpha)$ realizes $A\rightarrow B$ if and only if $P$ is an OTM-program and $\alpha$ is an ordinal such that, whenever a (code for a) realizer $R$ for $A$ is given as an input, $P(R,\alpha)$ computes a realizer $R^{\prime}$ for $B$.
	\item A realizer for $\neg\phi$ is a realizer for $\phi\rightarrow 1=0$.
	\item $(P,\alpha)$ realizes $\exists{x}\phi(x,a_{0},...,a_{n})$ if and only if $P(\alpha,c(a_{0}),...,c(a_{n}))$ halts with output $(c(b),R)$ where $c(b)$ codes a set $b$ such that 
	$R$ realizes $\phi(b,a_{0},...,a_{n})$. 
	\item $(P,\alpha)$ realizes $\forall{x}\phi(x,a_{0},...,a_{n})$ if and only if, for every code $c(a)$ for a set $a$, $P(\alpha,c(a),c(a_{0}),...,c(a_{n}))$ halts with output $R$ such that 
	$R$ realizes $\phi(a,a_{0},...,a_{n})$. 
	\item When $\phi$ contains the free variables $x_{1},...,x_{n}$, then $R$ realizes $\phi$ if and only if $R$ realizes $\forall{x_{1},...,x_{n}}\phi$.
\end{enumerate}
If there is $R$ such that $R$ realizes $\phi$, $\phi$ is called OTM-realizable.
\end{defini}

When evaluating the OTM-realizability of set-theoretical axioms, we will also need to consider axiom schemes like comprehension of replacement. Our interpretation is as follows: Let $A(\phi_{1},...,\phi_{n})$ be a formula in which $\phi_{1},...,\phi_{n}$ occur
as (propositional) variables. Then we say that $A$ is OTM-realizable if and only if there are an OTM-program $P$ and an ordinal $\alpha$ such that, whenever $i_{1},...,i_{n}$ is an $n$-tuple of G\"odel numbers for formulas, $P((i_{1},...,i_{n}),\alpha)$ computes a realizer for $A(\phi_{i_{1}},...,\phi_{i_{n}})$.

In the quantifier rules, we could also demand that the output of the programs $P$ does not depend on the encoding of the input $(a_{0},...,a_{n})$, but only on the input itself.
This is a stronger demand, which we call `absolute OTM-realizability'. Absolute OTM-realizability differs from OTM-realizability in some respects; for example, under sufficiently large cardinals, the statement $\forall{x\in\mathfrak{P}(\mathfrak{P}(\omega))\setminus\{\emptyset\}}\exists{z\in x}(z=z)$ is OTM-realizable, but not absolutely OTM-realizable.\footnote{We briefly sketch the argument for the interested reader: Suppose that $\mathfrak{P}(\mathfrak{P}(\omega))$ is large enough so that $L(\mathfrak{P}(\mathfrak{P}(\omega)))$ does not contain a choice function for $\mathfrak{P}(\mathfrak{P}(\omega))$. Suppose moreover that the forcing for making $\mathfrak{P}(\mathfrak{P}(\omega))$ well-ordered has two mutually generic filters $G$ and $F$ over $L(\mathfrak{P}(\mathfrak{P}(\omega)))$. The absolute realizability of the statement in question would imply the existence of a program $P$ computing a choice function $c$ for $\mathfrak{P}(\mathfrak{P}(\omega))$, independently of the coding of the input. Thus $c$ would be contained in 
	$L(\mathfrak{P}(\mathfrak{P}(\omega))))[G]\cap (L(\mathfrak{P}(\mathfrak{P}(\omega))))[F]=L(\mathfrak{P}(\mathfrak{P}(\omega)))$, a contradiction.} We will not discuss absolute OTM-realizability further in this note.

In the quantifier clauses of the above definition, we have allowed ordinal parameters. By the obvious modification, we can also
obtain a parameter-free version of (absolute) OTM-realizability. In this note, we only consider the version with parameters, even though the parameter-free version is arguably the more `constructive' one in spirit. This does not effect the results, below, though the details concerning the comprehension axiom are a bit different.

\subsection{Basic Observations}

We make some immediate observations on OTM-realizability.

For an $\in$-formula $\phi$, let $\bar{\phi}$ denote the classical negation of $\phi$ put in a form in which negation signs only appear in front of atomic formulas.

\begin{prop}
There are no $\in$-formula $\phi$ and sets $a_{0},...,a_{n}$ such that $\phi(a_{0},...,a_{n})$ and $\bar{\phi}(a_{0},...,a_{n})$ are both OTM-realizable.
\end{prop}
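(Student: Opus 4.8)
The plan is to prove the statement by induction on the logical structure of $\phi$, exploiting that $\bar\phi$ is the De Morgan dual of $\phi$ and hence mirrors its syntax tree: $\overline{\phi_{1}\wedge\phi_{2}}=\bar\phi_{1}\vee\bar\phi_{2}$, $\overline{\phi_{1}\vee\phi_{2}}=\bar\phi_{1}\wedge\bar\phi_{2}$, $\overline{\exists x\,\psi}=\forall x\,\bar\psi$, $\overline{\forall x\,\psi}=\exists x\,\bar\psi$, and on atoms $\bar\phi$ merely flips $\in,=$ to $\notin,\neq$. I would phrase the induction hypothesis with the parameters \emph{universally quantified}: for every formula of strictly smaller complexity and every parameter tuple, the formula and its dual are not both OTM-realizable. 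This universal form is needed because the quantifier step instantiates a subformula at a fresh set.

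For the base case I take $\phi$ quantifier-free. By clause 1 of the definition of OTM-realizability, a quantifier-free formula is realizable if and only if it is classically true; since $\bar\phi$ is again quantifier-free and classically equivalent to $\neg\phi$, at most one of $\phi,\bar\phi$ is true, hence at most one is realizable. The only point to check here is the treatment of a negated atom such as $\neg(a\in b)$, which could be read either directly via clause 1 or via clauses 5 and 4 as $(a\in b)\to 1=0$; the two readings agree, because $1=0$ is an atomic falsehood and hence never realizable, so a realizer of $(a\in b)\to 1=0$ exists exactly when $a\in b$ has no realizer, i.e.\ exactly when $a\notin b$.

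For the inductive step I peel off the outermost connective. If $\phi=\phi_{1}\wedge\phi_{2}$ and both $\phi$ and $\bar\phi=\bar\phi_{1}\vee\bar\phi_{2}$ were realizable, then by clause 2 both $\phi_{1},\phi_{2}$ are realizable, while by clause 3 some $\bar\phi_{i}$ is realizable; this makes $\phi_{i}$ and $\bar\phi_{i}$ simultaneously realizable, contradicting the induction hypothesis. The disjunctive case is symmetric. For $\phi=\exists x\,\psi(x,a_{0},\dots,a_{n})$ with dual $\forall x\,\bar\psi(x,a_{0},\dots,a_{n})$: a realizer of $\phi$ halts (clause 6) with output $(c(b),R)$ where $R$ realizes $\psi(b,a_{0},\dots,a_{n})$, and a realizer of $\forall x\,\bar\psi$ (clause 7), run on the very code $c(b)$, returns a realizer of $\bar\psi(b,a_{0},\dots,a_{n})$; thus $\psi(b,a_{0},\dots,a_{n})$ and its dual are both realizable, contradicting the induction hypothesis (which applies since their complexity is smaller and the hypothesis ranges over all parameter tuples, in particular over $b$). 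The case $\phi=\forall x\,\psi$ is symmetric, now using the witness supplied by the existential dual.

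The genuinely delicate step is this quantifier case: I must ensure that the set $b$ witnessing the existential, together with its code $c(b)$, can legitimately be fed into the program realizing the universal dual, and that the induction hypothesis is available at the new parameter $b$ — both of which are secured by quantifying the hypothesis over all parameter tuples and by the fact that clauses 6 and 7 speak about \emph{arbitrary} codes of sets. The remaining cases ($\wedge$, $\vee$, negated atoms) are routine once the base-case consistency remark above is in place.
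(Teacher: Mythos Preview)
Your argument is correct and is exactly the approach the paper takes: its entire proof is the single line ``An easy induction on syntax,'' and you have simply spelled that induction out. The only case you omit is an outermost implication, but since $\bar\phi$ is by construction in negation normal form (and the paper's very next remark explicitly restricts to implication-free NNF formulas), this matches the paper's own level of precision.
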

\begin{proof}
	An easy induction on syntax.	
\end{proof}

It might seem that OTM-realizability is a notion of `effective truth' that is stronger than mere truth, so that in particular, if $\phi$ is OTM-realizable, then $\phi$ will hold in the classical sense. This does indeed hold by an easy induction on formulas for formulas in negation normal form that do not contain implications. For statements using negation, however, it is false:

\begin{prop}
	There are $\phi$ and $\psi$ such that $\phi\rightarrow\psi$ is OTM-realizable, but false in $V$.
\end{prop}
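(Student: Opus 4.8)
The plan is to exploit the way clause~4 of Definition~\ref{otm realizable} treats implications whose antecedent has no realizer. First I would isolate the following observation: if $A$ is \emph{not} OTM-realizable, then $A\rightarrow B$ is OTM-realizable for \emph{every} $B$. Indeed, by clause~4 a pair $(P,\alpha)$ realizes $A\rightarrow B$ as soon as $P$ turns every code of a realizer of $A$ into a code of a realizer of $B$; if $A$ admits no realizer at all, this condition is vacuously met by any program $P$ (say, one that never halts) and any ordinal $\alpha$. Consequently, to prove the proposition it suffices to produce a single sentence $\phi$ that is \emph{classically true} but \emph{not} OTM-realizable: taking $\psi$ to be the false quantifier-free sentence $1=0$, the implication $\phi\rightarrow\psi$ (which by clause~5 is just $\neg\phi$) is then OTM-realizable, while it is false in $V$ because $\phi$ holds and $\psi$ fails.

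It remains to exhibit such a $\phi$. Here I would use Koepke's theorem that a set of ordinals is computable by an OTM from ordinal parameters if and only if it is constructible \cite{KoOTM}. Since every realizing OTM-computation in the sense of Definition~\ref{otm realizable} returns, as its output, a \emph{code}, i.e.\ a set of ordinals, and since the procedure that decodes a code back into the set it represents is absolute between $L$ and $V$, any set that an OTM can hand back as a witness necessarily lies in $L$. Working under the mild, consistent hypothesis that $V\neq L$---equivalently, that some non-constructible set exists---consider $\phi:=\exists x\,(x\notin L)$. This sentence is true in $V$ by assumption. But it cannot be OTM-realizable: a realizer $(P,\alpha)$ of $\phi$ would, by clause~6, compute on input $\alpha$ an output $(c(b),R)$ whose first component $c(b)$ is an OTM-computable set of ordinals, hence an element of $L$, so that the coded witness $b$ lies in $L$ as well; yet $R$ is supposed to realize $b\notin L$.

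The step I expect to be the main obstacle is the last one: ruling out that the witness-realizer $R$ realizes the negative matrix $b\notin L$ even though $b\in L$. Since $b\notin L$ abbreviates $(b\in L)\rightarrow(1=0)$, I would argue that for a constructible $b$ the sentence $b\in L$ is itself OTM-realizable---an OTM can, from a suitable ordinal parameter, generate a code of an initial segment $L_{\eta}$ of the constructible hierarchy containing $b$, and thereby realize the existential "$\exists\eta\,(b\in L_{\eta})$". Feeding such a realizer into $R$ would then, by clause~4, produce a realizer of $1=0$, contradicting clause~1, under which no false quantifier-free sentence is realizable. Carrying this out requires pinning down a concrete $\Sigma$-formula for constructibility and verifying that an OTM realizes it uniformly in a parameter; the remaining ingredients---absoluteness of the decoding and Koepke's characterization---are then routine to invoke. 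One should also note that the construction uses $V\neq L$, so the statement is most naturally read as holding in any universe containing a non-constructible set (for instance, whenever $0^{\sharp}$ exists).
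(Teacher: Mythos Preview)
Your reduction is exactly the one the paper uses: an implication with an unrealizable antecedent is vacuously OTM-realizable, so it suffices to exhibit a classically true sentence $\phi$ that is not OTM-realizable and take $\psi$ to be $1=0$. The divergence is only in the choice of $\phi$. The paper takes $\phi=\text{POT}$, the power set axiom, which it later shows is \emph{unconditionally} not OTM-realizable (explicitly, ``not even if one assumes $V=L$''), via the elementary fact that OTM-computations cannot raise cardinalities. Your choice $\phi\equiv\exists x\,(x\notin L)$ instead requires the extra hypothesis $V\neq L$; under $V=L$ your $\phi$ is false, so $\phi\rightarrow(1=0)$ is true in $V$ and the example collapses. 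Since the proposition is stated without any such side assumption, this leaves a genuine gap.

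Beyond the conditionality, your argument is also heavier than necessary: you invoke Koepke's $L$-characterization of OTM-computable sets of ordinals, absoluteness of decoding, and a separate verification that ``$b\in L$'' is OTM-realizable for constructible $b$. The paper's route via POT avoids all of this, needing only the single-line cardinality obstruction. So the fix is easy: keep your vacuous-implication mechanism and swap in POT (or any other unconditionally true, unconditionally unrealizable axiom) for $\phi$.
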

\begin{proof}
	We will see below that the power set axiom POT is not OTM-realizable; however, it is clearly holds in $V$. Now POT$\rightarrow(1=0)$ is trivially OTM-realizable (as there are no OTM-realizers of POT, any OTM-program turns such an OTM-realizer into one of $1=0$), but also clearly false in $V$.
\end{proof}

\section{OTM-Realizability and Intuitionistic Provability}

We check that the deduction rules of the Hilbert-style calculus for intuitionistic logic preserve OTM-realizability.

The Hilbert calculus for intutionistic logic consists of $11$ logical axioms and four deduction rules. We need to show that (i) every instance of a logical axiom is OTM-realizable and (ii) that, whenever $\psi$ can be obtained by one of the deduction rules from $\phi$ and $\phi$ is OTM-realizable, then so is $\psi$.

We do not recall the logical axioms of intuitionistic Hilbert calculus here; they can be found in \cite{CSTBook}, Def. 2.5.1.

Furthermore, we have the following inference rules, see again \cite{CSTBook}, Def. 2.5.1:

\begin{enumerate}[i]
\item (Modus ponens) Given $\phi$ and $\phi\rightarrow\psi$, one may infer $\psi$
\item Given $\phi\rightarrow\psi$, where $x$ does not appear freely in $\phi$, one may infer $\phi\rightarrow\forall{x}\psi$
\item Given $\phi\rightarrow\psi$, where $x$ does not appear freely in $\psi$, one may infer $\exists{x}\phi\rightarrow\psi$.
\end{enumerate}

\begin{prop}
Every instantiation of one of the propositional axioms of intuitionstic logic by $\in$-formulas is OTM-realizable. Moreover, any instantiation of one of the quantifier axioms of intuitionistic logic by $\in$-formulas is OTM-realizable.
\end{prop}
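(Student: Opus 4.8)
The plan is to proceed by a case distinction over the finitely many logical axiom schemes, exhibiting for each an explicit OTM-realizer; the guiding principle is the Curry--Howard correspondence, so that each axiom corresponds to a simply-typed combinator and it suffices to implement these combinators on OTMs. I will freely use the standard facts about OTMs from Koepke \cite{KoOTM}: there is a universal OTM, the OTM-computable functions are closed under composition, and Cantor's pairing function together with its inverses is OTM-computable on ordinals. A useful preliminary observation, proved by induction on the formula using Definition \ref{otm realizable}, is that every realizer can be coded by a single ordinal: realizers for quantifier-free formulas, implications, negations, and quantified formulas are pairs $(P,\alpha)$ of a program and an ordinal, while the clauses for $\wedge$ and $\vee$ build pairs of (inductively single-ordinal) realizers. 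The genuinely set-valued objects, namely the codes $c(b)$ of witnesses, occur only as \emph{outputs} of programs and never as the realizers themselves. Consequently an ordinal parameter $\alpha$ is always rich enough to store an auxiliary realizer, which is what makes the combinators implementable.

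For the propositional axioms the realizers are the expected combinators. For $\phi\rightarrow(\psi\rightarrow\phi)$ the realizing program, on input a realizer $R$ for $\phi$, outputs the pair $(P',R)$, where $P'$ is the fixed program that returns its ordinal argument and discards its first argument; applied to any realizer for $\psi$ this pair returns $R$, so it realizes $\psi\rightarrow\phi$. For $(\phi\rightarrow(\psi\rightarrow\chi))\rightarrow((\phi\rightarrow\psi)\rightarrow(\phi\rightarrow\chi))$ one implements the $\mathsf{S}$-combinator: given realizers $R_1,R_2,R_3$ for $\phi\rightarrow(\psi\rightarrow\chi)$, $\phi\rightarrow\psi$ and $\phi$, one forms $(R_1\cdot R_3)\cdot(R_2\cdot R_3)$, where $R\cdot S$ abbreviates running the program component of $R$ on input $S$ together with its ordinal component. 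The intermediate realizers are threaded through the nested implications by storing them, as single ordinals, in the ordinal parameters produced at each stage. The conjunction axioms are immediate from clause (2): $\phi\wedge\psi\rightarrow\phi$ and $\phi\wedge\psi\rightarrow\psi$ are realized by the unpairing projections, and $\phi\rightarrow(\psi\rightarrow(\phi\wedge\psi))$ by the program that remembers its first argument in the ordinal parameter and pairs it with its second. Likewise the disjunction axioms follow from clause (3): the injections $R\mapsto(0,R)$ and $R\mapsto(1,R)$ realize $\phi\rightarrow\phi\vee\psi$ and $\psi\rightarrow\phi\vee\psi$, and the program branching on the tag $i$ of a realizer $(i,R)$ realizes $(\phi\rightarrow\chi)\rightarrow((\psi\rightarrow\chi)\rightarrow(\phi\vee\psi\rightarrow\chi))$. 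Finally, ex falso $(1=0)\rightarrow\phi$ is realized vacuously by any pair $(P,\alpha)$, since by clause (1) the false atomic formula $1=0$ has no realizer, by the same vacuous-realizer argument already used above.

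For the quantifier axioms I first reduce to the closed case: by clause (8) it suffices to realize the universal closure over the free set-variables, so the realizing program is handed codes $c(\vec y)$ for an arbitrary assignment and must produce, uniformly in these codes, a realizer for the corresponding closed instance. Since the $\in$-language has no function symbols, the term $t$ in the axioms is a variable, and its value $a$ under the assignment is one of the $\vec y$, so $c(a)$ is literally among the inputs. For $\forall x\,\phi(x)\rightarrow\phi(t)$, given a realizer $(Q,\beta)$ for the universal statement, clause (7) guarantees that $Q(\beta,c(a),c(\vec y))$ halts with a realizer for $\phi(a)=\phi(t)$, which the program simply runs and returns. For $\phi(t)\rightarrow\exists x\,\phi(x)$, given a realizer $R$ for $\phi(t)=\phi(a)$, the program outputs the existential realizer $(P',R)$, where $P'$, as required by clause (6), receives the codes $c(\vec y)$ as input, extracts $c(a)$ from among them, and halts with output $(c(a),R)$. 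Here it is essential that clause (6) feeds the parameter codes to $P'$: the set-code $c(a)$ of the witness need not be stored in the ordinal parameter (which could not in general hold it), because it is already available as an input, whereas the subsidiary realizer $R$, being a single ordinal, is stored in the parameter.

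The main obstacle is not any single axiom but the uniform bookkeeping forced by the implication and quantifier clauses. Realizing an implication means producing a program together with an ordinal parameter, and the nested implications in the $\mathsf{S}$-combinator and in the disjunction-elimination axiom require threading several previously received realizers through successive stages; making this precise amounts to an $s$-$m$-$n$-style theorem for OTMs, which is available precisely because realizers are single ordinals and can therefore be absorbed into the ordinal parameters, to be retrieved and applied later via the universal OTM. The one point demanding genuine care is the interaction between this parameter-passing discipline and the set-valued witness codes of the existential clause: in each case I must check that whatever has to be \emph{stored} is ordinal-valued, while the only set-valued data, the witness codes, are supplied as program inputs by clauses (6) and (7) and hence never need storing. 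Once this is verified, every axiom is handled by an explicit, assignment-uniform OTM-program, completing the case analysis.
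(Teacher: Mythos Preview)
Your proposal is correct and follows essentially the same approach as the paper: both argue that the classical Kleene-realizability constructions transfer to the OTM setting, building realizers as the standard Curry--Howard combinators. The paper's proof is in fact much terser than yours, treating only the disjunction-elimination axiom as a representative example, whereas you carry out the full case analysis and make explicit the $s$-$m$-$n$-style bookkeeping (storing intermediate realizers in ordinal parameters, recovering witness codes from the program inputs supplied by clauses (6)--(8)) that the paper leaves implicit.
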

\begin{proof}
The proof follows the classical proof that instances of the axioms are Kleene realizable. We consider an examplaric case.
Let $(\phi\rightarrow\psi)\rightarrow((\chi\rightarrow\psi)\rightarrow((\chi\rightarrow\psi)\rightarrow((\phi\vee\chi)\rightarrow\psi))$ be an instance of (7); we want to show that it is OTM-realizable, say by $R$.
That is, $R$ will have to turn a realizer $R_{(\phi\rightarrow\psi)}$ for $(\phi\rightarrow\psi)$ into a realizer for $((\chi\rightarrow\psi)\rightarrow((\phi\vee\chi)\rightarrow\psi))$.
Suppose that $R_{(\phi\rightarrow\psi)}$ is a realizer for $(\phi\rightarrow\psi)$, so $R_{(\phi\rightarrow\psi)}$ will turn realizers for $\phi$ into realizers for $\psi$.
We want to produce a program that turns realizers for $(\chi\rightarrow\psi)$ into realizers for $((\phi\vee\chi)\rightarrow\psi)$. Suppose a realizer $R_{(\chi\rightarrow\psi)}$ for $(\chi\rightarrow\psi)$ 
is given, and moreover, that we have a realizer $R_{(\phi\vee\chi)}$ for $(\phi\vee\chi)$. The latter is of the form $(i,R^{\prime})$, where $i\in\{0,1\}$ and $R^{\prime}$ is a realizer for $\phi$ when $i=0$ and a realizer for $\chi$ when $i=1$. The computation proceeds as follows: When $i=0$, apply $R_{(\phi\rightarrow\psi)}$ to $R^{\prime}$ to obtain a realizer for $\psi$. When $i=1$,
apply $R_{(\chi\rightarrow\psi)}$ to $R^{\prime}$ to again obtain a realizer for $\psi$. Clearly, this works as desired.
\end{proof}

\begin{prop}
When $\Phi$ is a set of OTM-realizable formulas and $\psi$ is obtained from elements of $\Phi$ via one of the deduction rules (i)-(iii), then $\psi$ is OTM-realizable.
\end{prop}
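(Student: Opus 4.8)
The plan is to treat the three deduction rules (i)--(iii) separately, in each case using the fixed realizers supplied by the hypotheses to build a realizer for the conclusion. Throughout I would rely on the OTM analogues of the enumeration and $s$-$m$-$n$ theorems (a universal OTM together with effective composition of programs, available from \cite{KoOTM}): given indices of OTM-programs as part of its input, an OTM can simulate them, feed their outputs into further programs, and repackage the results, all uniformly. Consequently, once I have described the informal procedure that turns the given realizers into a realizer for $\psi$, the existence of an actual program--ordinal pair carrying it out is automatic, and I may bundle the finitely many ordinal parameters that occur into a single ordinal via Cantor's pairing function. I would also invoke the convention of clause (8) --- realizing a formula with free variables means realizing its universal closure --- and so silently pass to universal closures, unpacking the leading universal quantifiers by clause (7). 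With this in hand, modus ponens (i) needs no construction at all: if $R$ realizes $\phi$ and $(P,\alpha)$ realizes $\phi\rightarrow\psi$, then by clause (4) the computation $P(R,\alpha)$ halts with a realizer of $\psi$, so $\psi$ is OTM-realizable.

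For rule (ii) I would assume that $x$ is not free in $\phi$ and that some $(P,\alpha)$ realizes the universal closure of $\phi\rightarrow\psi$, and aim to realize the universal closure of $\phi\rightarrow\forall x\,\psi$. Writing $\bar{a}$ for an assignment to the remaining free variables, the goal reduces to producing, uniformly in $\bar{a}$, a realizer of $\phi(\bar{a})\rightarrow\forall x\,\psi(\bar{a},x)$. Given such an $\bar{a}$ together with a realizer $S$ of $\phi(\bar{a})$, I would describe the required realizer of $\forall x\,\psi(\bar{a},x)$ thus: on input a code $c(b)$, first run $(P,\alpha)$ at the parameters $\bar{a},b$ to extract a realizer of $\phi(\bar{a})\rightarrow\psi(\bar{a},b)$, then apply that realizer to $S$ to obtain, and output, a realizer of $\psi(\bar{a},b)$. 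The decisive point is that, because $x$ does not occur freely in $\phi$, the formula $\phi(\bar{a})$ and hence its realizer $S$ do not depend on $b$, so the single realizer $S$ may legitimately be reused for every witness $b$; this is exactly where the side condition on $x$ is consumed.

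Rule (iii) I would handle dually. Assuming $x$ is not free in $\psi$ and that $(P,\alpha)$ realizes the universal closure of $\phi\rightarrow\psi$, I would aim to realize the universal closure of $\exists x\,\phi\rightarrow\psi$. Fixing an assignment $\bar{a}$ to the remaining variables and a realizer $U$ of $\exists x\,\phi(\bar{a},x)$, I would run $U$; by clause (6) this yields a pair $(c(b),R')$ in which $b$ is a witness and $R'$ realizes $\phi(\bar{a},b)$. I would then extract from $(P,\alpha)$, at the parameters $\bar{a},b$, a realizer of $\phi(\bar{a},b)\rightarrow\psi(\bar{a})$, apply it to $R'$ to get a realizer of $\psi(\bar{a})$, and output this. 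Here the side condition that $x$ is not free in $\psi$ ensures that the target $\psi(\bar{a})$ is independent of the witness $b$, so that whichever witness $U$ happens to produce, the resulting object realizes the same fixed statement.

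The computational content --- unpairing codes, simulating the given subprograms, and composing their outputs --- is routine for OTMs, so I expect no genuine difficulty there. The one place that will demand care in the full write-up is the bookkeeping: correctly tracking the free variables through the universal-closure convention and threading the ordinal parameters through the nested computations, making sure in rules (ii) and (iii) that the reused realizer (respectively, the fixed target) really is independent of the bound variable. That is the main, if modest, obstacle.
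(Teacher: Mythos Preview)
Your proposal is correct and follows essentially the same approach as the paper: modus ponens by direct application, rule (ii) via the free-variable/universal-closure convention, and rule (iii) by extracting the witness and realizer from the existential realizer and feeding the latter into the given realizer for $\phi\rightarrow\psi$. The only difference is that you are considerably more explicit about the bookkeeping (universal closures, re-currying in rule (ii), ordinal parameters), whereas the paper dispatches rule (ii) in a single clause by appealing to the definition of realizability for open formulas and treats the computational overhead as understood.
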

\begin{proof}
Again, this works as in the case of Kleene realizability.

The statement is clear for (i): If we have a program for turrning realizers for $\phi$ into realizers for $\psi$ and we have a realizer for $\phi$ available, we can obtain a realizer for $\psi$.
The first quantifier rule is immediate by the definition of realizability for formulas with free variables. For the second quantifier rule, note that the condition that $x$ does not appear freely in $\psi$
implies that $\phi\rightarrow\psi$ is OTM-realizable if and only if $\forall{x}(\phi\rightarrow\psi)$ is OTM-realizable; let $R$ be a realizer for this formula. Thus, for any $a$, $R$ will turn a realizer for $\phi(a)$ into a realizer for $\psi$. Now suppose that an OTM-realizer for $\exists{x}\phi$. This will consist of (a code for) some set $a$ and a realizer $R^{\prime}$ for $\phi(a)$. Applying $R$ to $R^{\prime}$ then yields the desired realizer for $\psi$.
\end{proof}

\section{Axioms and systems of constructive set theories}

We now discuss the OTM-realizability of the axioms of ZFC set theory and their most prominent constructive variants, as given in \cite{SEPIntST}.

It is easy to see that the axioms of Empty Set Existence, Extensionality, Pairing, Union and Infinity are OTM-realizable.

\begin{prop}
	The comprehension scheme $\forall{a}\exists{x}\forall{y}(y\in x\leftrightarrow (y\in a\wedge\phi(y))$ is not OTM-realizable.
	%has instantiations with $\in$-formulas $\phi$ that are not OTM-realizable. 
	However, the restriction of the comprehension scheme to $\Delta_{0}$-formulas is OTM-realizable.
	%every instantiation by a $\Delta_{0}$-formula is OTM-realizable.
\end{prop}
\begin{proof}
	Suppose for a contradiction that $(P,\alpha)$ realizes the comprehension scheme.
	Let $\phi(n,\alpha)$ be the formula expressing that the $n$th OTM-program halts in the parameter $\alpha$, and suppose that it has the G\"odel number $k$. Then $P(k,\alpha)$ would compute the halting problem for OTM-programs with parameter $\alpha$ using the parameter $\alpha$, which is clearly impossible.
	
	The second statement is an easy consequence of the ability of OTMs to evaluate $\Delta_{0}$-formulas, see \cite{KoOTM}.
\end{proof}

This is in agreement with the constructive criticism of unbounded comprehension as `impredicative'. However, by reflection in $L$, we have:

\begin{prop}
	If $V=L$, then every instantiation of the comprehension scheme is OTM-realizable.
\end{prop}

The idea is to pick, given a set $a$, a formula $\phi(x)$ and parameters $\vec{p}$, an $L$-level $L_{\alpha}$ containing $\vec{p}$ and $a$ reflecting $\phi$ and then to use $\alpha$ as a parameter. From a constructive viewpoint, this may be a reason to favor the parameter-free version of OTM-realizability.

\bigskip

The following may come as a small surprise; however, noting its dependence on the reading assigned here to implication, it is quite natural.

\begin{prop}
	The collection axiom $\forall{x\in X}\exists{y}\phi(x,y)\rightarrow\exists{Y}\forall{x\in X}\exists{y\in Y}\phi(x,y)$, and thus the replacement axiom and the strong collection axiom, are OTM-realizable.
\end{prop}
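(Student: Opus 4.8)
The plan is to exhibit, for the collection axiom, an explicit OTM-program-plus-ordinal pair that realizes the implication. By clause (4) of Definition \ref{otm realizable}, a realizer for the implication must be a program $P$ (with an ordinal parameter) that, given a realizer $R$ for the antecedent $\forall x\in X\,\exists y\,\phi(x,y)$, produces a realizer for the consequent $\exists Y\,\forall x\in X\,\exists y\in Y\,\phi(x,y)$. So I would first unwind what a realizer $R$ for the antecedent actually provides. Reading the bounded quantifier $\forall x\in X$ in the obvious way (as $\forall x\,(x\in X\rightarrow\cdots)$), $R$ gives us, uniformly in a code $c(x)$ for any $x\in X$, access to a program-ordinal pair which on input $c(x)$ halts with output $(c(y_{x}),R_{x})$ such that $R_{x}$ realizes $\phi(x,y_{x})$. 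The key point is that $R$ is a \emph{single} finite-plus-ordinal object that handles all $x\in X$ at once, so it encodes a uniform procedure for witnessing the existential quantifier.

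Next I would construct the witness $Y$ and the realizer for the consequent. Given a code for $X$, an OTM can enumerate codes for all elements of $X$ (the code $c(X)$ literally lists, via Cantor pairing, the $\in$-relation on the transitive closure, so the elements of $X$ are readable off). For each such $x$, feed $c(x)$ to the antecedent-realizer $R$ to obtain $(c(y_{x}),R_{x})$, and collect the resulting sets $y_{x}$ into a set $Y$; an OTM can assemble a code $c(Y)$ for this collection by writing down all the pairs it has produced. This is exactly where the power of OTMs over finite Turing machines is used: the loop over the elements of $X$ is a transfinite loop of length $|X|$-many steps, and the bound $\alpha$ in the realizer absorbs the ordinal length of this computation together with the running times of $R$ on the individual inputs. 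Having built $Y$, the realizer for the consequent is the pair $(c(Y),S)$, where $S$ must realize $\forall x\in X\,\exists y\in Y\,\phi(x,y)$; but $S$ is essentially just a repackaging of $R$, since for each $x\in X$ the witness $y_{x}$ already lies in $Y$ by construction, and $R_{x}$ already realizes $\phi(x,y_{x})$, so $S$ on input $c(x)$ can recompute $y_{x}$ and return $(c(y_{x}),R_{x})$ together with the information that $y_{x}\in Y$.

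The main obstacle I anticipate is \emph{not} the logical content — as the remark preceding the proposition hints, the reason collection is realizable is that the intuitionistic implication is read computationally, so we get the uniform witnessing function $R$ essentially for free and never need a genuinely non-constructive choice. Rather, the delicate points are bookkeeping: (a) verifying that a single ordinal $\alpha$ suffices as the halting-time bound uniformly across all the calls to $R$, which requires noting that $R$ comes with its own ordinal parameter $\alpha_{R}$ and that the transfinite loop over $X$ has length bounded by an ordinal computable from $c(X)$, so their combination is again an ordinal; and (b) checking that an OTM can actually assemble the code $c(Y)$ from the codes $c(y_{x})$ in a way that respects the encoding conventions for sets of ordinals. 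Both of these are routine given the basic computational facts about OTMs from \cite{KoOTM}, so I would state them and refer to that reference rather than grinding through the coding details. I would emphasize in the write-up that the replacement and strong collection schemes follow immediately, since their antecedents provide at least as much uniform witnessing data as plain collection does.
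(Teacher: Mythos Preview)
Your proposal is correct and follows essentially the same approach as the paper: unwind the realizer of the antecedent to obtain a uniform OTM-procedure that, for each $x\in X$, computes a witness $y_{x}$ with a realizer of $\phi(x,y_{x})$, then loop over the elements of $X$ and collect the outputs into $Y$. The paper's proof is a two-sentence sketch of exactly this idea, so your version is simply a more detailed elaboration (the bookkeeping points (a) and (b) you flag are left implicit there).
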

\begin{proof}
A realizer of $\forall{x\in X}\exists{y\in Y}\phi(x,y)$ consists in an OTM-program $P$ and an ordinal $\alpha$ such that, on input $c(a)$, $P(c(a),\alpha)$ will compute a code $c(b)$ for some set $b$ such that $\phi(a,b)$. But then, it suffices to apply this program to all elements of $X$ and collect the results together.
\end{proof}

\begin{prop}
	The power set axiom POT is not OTM-realizable, not even if one assumes $V=L$; moreover, the `subset collection scheme' from CZF has instantiations that are not OTM-realizable.
\end{prop}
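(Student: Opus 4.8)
The plan is to exploit the rigidity of OTM computations under condensation. First I would unwind the realizability clauses to see what a realizer of POT actually delivers. Writing POT as $\forall x\,\exists y\,\forall z(z\in y\leftrightarrow z\subseteq x)$ and noting that $z\in y$ is atomic and $z\subseteq x$ is $\Delta_{0}$ (so both are handled by clause (1)), a realizer $R$ for the inner biconditional exists exactly when the coded $y$ has as its elements precisely the subsets of $x$, i.e.\ $y=\mathfrak{P}(x)$. Composing the programs supplied by clauses (7) and (6), realizability of POT therefore yields a single OTM-program together with an ordinal parameter $\alpha$ which, on input any code $c(a)$, halts with a code for $\mathfrak{P}(a)$.

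Assume toward a contradiction that such a realizer exists, and carry out the argument inside $L$ (legitimate under $V=L$, and in general because OTM computations on constructible codes are absolute to $L$). Fix a cardinal $\kappa\geq\alpha$ and feed in the canonical code $c(\kappa)$ of the ordinal $\kappa$; the computation halts at some time $\tau$ with output $o=c(\mathfrak{P}(\kappa))$. The quantitative heart of the matter is that $o$, coding a set of cardinality $2^{\kappa}=\kappa^{+}$, must itself have cardinality $\kappa^{+}$: each of the $\kappa^{+}$ subsets of $\kappa$ contributes a distinct pairing-code recording its membership in $\mathfrak{P}(\kappa)$, so $o\subseteq\kappa^{+}$ with $|o|=\kappa^{+}$, and in particular $\tau\geq\kappa^{+}$. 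I would then run a standard condensation argument: choose $\Theta$ large with $L_{\Theta}$ modelling enough of ZF, and an elementary substructure $X\prec L_{\Theta}$ with $\kappa\cup\{\kappa,c(\kappa),\alpha,\tau,o\}\subseteq X$, $|X|=\kappa$, and $X\cap\kappa^{+}\in\kappa^{+}$. Let $\pi\colon X\cong L_{\Theta'}$ be the transitive collapse. Since $\kappa+1\subseteq X$, the map $\pi$ fixes $\kappa$ and all smaller ordinals, hence fixes $\alpha$, $c(\kappa)$ and the program; but it strictly lowers $\tau$, because $\pi(\tau)=\mathrm{ot}(X\cap\tau)\leq|X|=\kappa<\kappa^{+}\leq\tau$.

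By elementarity, $L_{\Theta'}$ believes that the very same program, on input $c(\kappa)$ with parameter $\alpha$, halts at time $\pi(\tau)$. As an OTM computation is a deterministic transfinite recursion whose stage-by-stage configurations are absolute between the transitive model $L_{\Theta'}$ and $L$ (both containing $c(\kappa)$, $\alpha$, and all ordinals up to $\pi(\tau)$), the genuine computation must then already halt at time $\pi(\tau)$, with the correspondingly collapsed output $\pi(o)\subsetneq o$. This contradicts that it halts at the strictly larger time $\tau$ with output $o$, since halting times and outputs are unique. Hence POT is not OTM-realizable, even assuming $V=L$. For the subset collection scheme I would reduce to the identical obstruction. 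Over the remaining axioms of CZF --- Strong Collection and the basic axioms, all shown OTM-realizable above --- Subset Collection intuitionistically proves Exponentiation, that for all $a,b$ the class ${}^{a}b$ of functions from $a$ to $b$ is a set. Because realizability is preserved by the rules of intuitionistic logic (by the propositions above), realizability of every instance of Subset Collection would produce a realizer of $\forall a\,\forall b\,\exists c\,(c={}^{a}b)$; instantiating $a=\kappa\geq\alpha$ and $b=2$ would uniformly compute a code for ${}^{\kappa}2$, a set of cardinality $\kappa^{+}$, and the same collapse argument yields a contradiction. Thus some instance of Subset Collection is not OTM-realizable.

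The step I expect to be the main obstacle is the absoluteness claim: I must verify carefully that OTM computations of set length are genuinely absolute between $L_{\Theta'}$ and $L$ --- so that the collapsed halting data describe the real computation and determinism can be invoked --- and that the output code for $\mathfrak{P}(\kappa)$ really has order type $\geq\kappa^{+}$, so that the collapse strictly moves the halting time. The Skolem-hull construction giving $X\cap\kappa^{+}\in\kappa^{+}$ and the bookkeeping about codes and the pairing function are routine by comparison.
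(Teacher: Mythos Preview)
Your argument is correct, but it is considerably heavier than what the paper actually does. For POT, the paper simply unwinds the realizer exactly as you do---obtaining a program with parameter $\alpha$ that on input (a code for) an ordinal returns a code for its power set---and then feeds in $\omega+\alpha$ and invokes as a black box the known fact that OTM-computations cannot increase cardinality (Koepke \cite{KoOTM}; see also \cite{GenEffRed}). Your condensation argument is essentially a self-contained proof of that very fact in the special case at hand: elementary hull of size $\kappa$, collapse, absoluteness of computations, contradiction with determinism. So you are reproving the lemma the paper cites. This buys you independence from the citation at the cost of length; conversely, the paper's route is a two-line appeal to a standard result. For the subset collection scheme the two approaches genuinely diverge. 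The paper exhibits a concrete bad instance: taking $a=b=\alpha$ and letting $\phi$ describe, for each parameter $u$ a nonempty $w\subseteq\alpha$, an OTM-computable surjection $\alpha\twoheadrightarrow w$; the witnessing set $c$ must then contain every nonempty subset of $\alpha$, and the cardinality obstruction strikes again. Your route is indirect: you use that Subset Collection together with the other CZF axioms intuitionistically derives Exponentiation, and that realizability is closed under intuitionistic deduction, so a hypothetical realizer would yield a computed code for ${}^{\kappa}2$. This is perfectly valid and conceptually clean, but it only shows \emph{some} instance fails without naming it, and it relies on the earlier closure-under-deduction propositions; the paper's argument is direct and pinpoints the offending instance.
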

\begin{proof}
Suppose for a contradiction that $(P,\alpha)$ realizes POT, i.e. the formula $\forall{x}\exists{y}\forall{z}(z\in y\leftrightarrow z\subseteq x)$. Then $P(\omega+\alpha,\alpha)$ computes the power-set of $\omega+\alpha\geq\alpha$. However, it is well known that OTM-computations cannot raise cardinals, see e.g. \cite{KoOTM} or \cite{GenEffRed}.

The subset collection scheme does not work either: For any infinite ordinal $\alpha$, take $a=\alpha$, $b=\alpha\times\{0,1\}$, and let $\phi(x,y,u)$ be the formula $\exists{i\in\{0,1\}}(y=(x,i)\wedge (i=1 \leftrightarrow x\in u))$. It is clearly trivial to realize $\forall{x \in\alpha}\exists{y\in b}\phi(x,y,u)$. Then the only set $d$ satisfying $\forall{\iota\in a}\exists{y \in b}\phi(\iota,y,u)\wedge\forall{y\in b}\exists{\iota\in a}\phi(\iota,y,u)$ is $\chi_{u\cap\alpha}$, the characteristical function of $u$.
  In our framework, $u$ can run through all sets, so a set $d$ that contains such a set $c$ for every $u$ must be a superset of $\{\chi_{s}:s\in\mathfrak{P}(\alpha)\}$. But this is again impossible because OTM-computations cannot raise cardinals. (But also note that the power set of $\alpha$ can easily be OTM-computed from a superset of the set of all characteristic function of elements of the powerset in the parameter $\alpha$.)

%$a=b=\alpha$, then let $u$ run through the non-empty subsets $w$ of $\alpha$ and let $\phi$ define an OTM-computable surjection from $\alpha$ to $w$ (which is easy: $f(\iota)=\iota$ when $\iota\in w$, otherwise $f(\iota)$ is the minimal element of $w$). This generates the power-set of $\alpha$, and so our counterexample from above works again.
\end{proof}

We note that the second part continues to hold even if the parameter $u$ is additionally restricted to ordinals.

\begin{prop}
The ``ordinal subset collection scheme'', which is obtained from the subset collection scheme by requiring the parameter $u$ to be an ordinal, has instantiations that are not OTM-realizable.
\end{prop}
\begin{proof}
We slightly modify the proof of the last proposition: Again let $a=\alpha$, $b=\alpha\times\{0,1\}$. For a given ordinal $\gamma$ (which plays the role of the parameter $u$), define $f(\gamma)$ by $$f(\gamma)=\begin{cases}\text{the }<_{L}\text{-minimal element of }\mathfrak{P}(\alpha)\cap L_{\gamma+1}\setminus L_{\gamma}\text{, if such a set exists}\\ \emptyset\text{, otherwise}\end{cases}$$.

It is easy to see that $f$ is OTM-computable. Let $\phi(x,y,\gamma)$ be the formula $\exists{i\in\{0,1\}}(y=(x,i)\wedge (i=1\leftrightarrow x\in f(\gamma)))$. Then the only set $d$ satisfying $\forall{\iota\in a}\exists{y \in b}\phi(\iota,y,u)\wedge\forall{y\in b}\exists{\iota\in a}\phi(\iota,y,u)$ is $\chi_{f(\gamma)\cap\alpha}$. Thus, a set $c$ containing such a $c$ for all ordinals $\gamma$ would be a superset of the set of all characteristical functions of constructible subsets of $\alpha$. From this, one could easily OTM-compute the constructible powerset of $\alpha$, which, however, is impossible by the last proposition.
\end{proof}

\begin{prop}
	The axiom of Regularity is OTM-realizable, but not absolutely so. The axiom of $\in$-induction is absolutely OTM-realizable.
\end{prop}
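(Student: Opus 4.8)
The plan is to treat the four assertions separately, establishing the two positive realizability claims first, since these are the robust part, and isolating the failure of absoluteness for Regularity as the delicate point.

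First I would realize Regularity, written as $\forall x(\exists z\,z\in x\rightarrow\exists y(y\in x\wedge\forall w(w\in y\rightarrow w\notin x)))$. Given a code $c(a)$ for an arbitrary set $a$, an OTM can decode the membership relation on $\text{tc}(\{a\})$ and, using the ability of OTMs to evaluate $\Delta_{0}$-formulas \cite{KoOTM}, test for each index $\iota$ whether $f(\iota)\in a$ and whether $f(\iota)$ is $\in$-minimal in $a$; the latter is the bounded statement $\forall w(w\in f(\iota)\rightarrow w\notin a)$, which the OTM can both decide and realize trivially by clause~1 once it is true. Since Foundation holds in $V$, a non-empty $a$ has an $\in$-minimal element, so the search for the least index $\iota$ witnessing minimality terminates; the program outputs a code for that element together with the realizer for the matrix. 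The realizer of the antecedent is needed only to witness non-emptiness. This establishes OTM-realizability.

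The realizer just described selects the minimal element of \emph{least ordinal index}, and this choice manifestly depends on the indexing supplied by the code. To show Regularity is not absolutely OTM-realizable, I would argue that any realizer must, on a set $a$ with several distinct $\in$-minimal elements, commit to one of them, and that this commitment cannot be made code-independently. Concretely, I would produce a set $a$ with two distinct $\in$-minimal elements together with two codes for $a$ that differ by a permutation of the ordinal indices assigned to those elements; an absolute realizer would have to return the same element on both codes, so the selection would have to be extracted from the $\in$-structure alone. The main obstacle, and the step I expect to require the most care, is ruling this out uniformly: one must show there is no single OTM-program effecting such a code-independent choice of minimal elements across all sets. I would reduce this to the failure of absolute OTM-realizability for a pure choice statement in the style of the statement over $\mathfrak{P}(\mathfrak{P}(\omega))$ discussed above, whose non-absolute-realizability encapsulates the impossibility of an absolute OTM-choice operation; the minimal-element operator inherits the same obstruction.

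For $\in$-induction, $\forall x(\forall y\in x\,\phi(y)\rightarrow\phi(x))\rightarrow\forall x\,\phi(x)$, the plan is to realize it by $\in$-recursion rather than by any choice. Given a realizer $H$ for the inductive hypothesis, I would build a program that, on a code for a set $a$, recursively computes realizers for $\phi(y)$ for each $y\in a$, assembles them into a realizer for $\forall y\in a\,\phi(y)$, and then applies $H$ at $a$ to obtain a realizer for $\phi(a)$. The recursion runs along the membership relation coded by $c(a)$ and terminates because that relation is well-founded (Foundation in $V$), and an OTM can carry out such a well-founded recursion. Crucially, the recursion follows the $\in$-structure and never consults the ordinal indexing, so the realizer produced for $\phi(a)$ depends only on $a$ (and on $H$ and $\phi$), not on the code. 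Hence $\in$-induction is absolutely OTM-realizable; the contrast with Regularity is exactly that induction demands recursion but no existential choice.
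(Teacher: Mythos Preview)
The paper supplies no proof for this proposition: the \verb|proof| environment is empty. Your proposal therefore goes well beyond what the paper offers, and your treatment of the three positive parts (OTM-realizability of Regularity via least-index search, and absolute OTM-realizability of $\in$-induction via $\in$-recursion that applies the hypothesis realizer $H$ along the well-founded membership relation) is sound and is exactly the kind of argument one would expect here.

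The one place where your sketch is genuinely incomplete is the negative claim that Regularity is \emph{not} absolutely OTM-realizable. You correctly identify the obstacle---an absolute realizer must select an $\in$-minimal element independently of the coding---but your proposed reduction to the $\mathfrak{P}(\mathfrak{P}(\omega))$ example is not carried out, and that example was stated in the paper only ``under sufficiently large cardinals''. More seriously, the reduction cannot succeed unconditionally: if $V=L$, then the canonical $<_{L}$-ordering is OTM-computable, and the program ``output the $<_{L}$-least $\in$-minimal element'' is a code-independent realizer of Regularity. So either the proposition needs an additional hypothesis (e.g.\ the existence of non-constructible reals, or the large-cardinal assumption already invoked for the $\mathfrak{P}(\mathfrak{P}(\omega))$ example), or the argument must explain why $V=L$ does not rescue absoluteness. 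Since the paper itself gives no proof, it is unclear what the intended hypothesis is; you should flag this explicitly rather than leave the reduction as a promise.
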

\begin{proof}	
\end{proof}

\begin{prop}
	The axiom of choice in the form $\forall{x}\exists{y}(\emptyset\notin x\rightarrow \forall{z\in x}(|y\cap z|=1))$ is not OTM-realizable. 
	
	The axiom of choice in the alternative formulation $\forall{x}(\forall{y\in x}\exists{z\in y}(z=z)\rightarrow\exists{f:x\rightarrow\bigcup{x}}\forall{y\in x}(f(y)\in y))$
	is OTM-realizable.
	
	The well-ordering principle WO is not OTM-realizable.
\end{prop}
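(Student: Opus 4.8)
The plan is to treat the three assertions separately, exploiting the one structural fact that drives everything here: a code $c(x)$ comes with a bijection $\iota\mapsto a_{\iota}$ from an ordinal $\alpha$ onto $\mathrm{tc}(x)\cup\{x\}$, so it secretly carries a well-ordering of $\mathrm{tc}(x)\cup\{x\}$ by the index $\iota$, and an OTM can read this ordering off, decide $\in$ and $=$ between coded sets, and evaluate $\Delta_{0}$-matrices. The two positive results will be built from this ``choice by least index'', and the two negative results will have to defeat it.

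For the second (choice-function) form I would argue realizability directly. A realizer of the antecedent $\forall y\in x\,\exists z\in y\,(z=z)$ guarantees that every $y\in x$ is non-empty, so I build a program that, on $c(x)$, reads off from the code the elements of each $y\in x$ and lets $f(y)$ be the one of least index. This $f$ is an OTM-computable set of pairs with $f\colon x\to\bigcup x$ and $f(y)\in y$ for every $y\in x$; outputting $c(f)$ together with the evident realizer of the $\Delta_{0}$ matrix and of the \emph{bounded} universal $\forall y\in x\,(f(y)\in y)$ (for each $y$ the witness is $f(y)$, and $f(y)\in y$ is true, hence realized) yields a realizer of the existential, and thus of the implication.

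The first (choice-set) form I would refute by noting that, as stated, it carries \emph{no} disjointness hypothesis, so it is already classically false: take the ``triangle'' $x=\{\{0,1\},\{1,2\},\{0,2\}\}$, for which $\emptyset\notin x$ yet no $y$ meets all three pairs in exactly one point. The key step is that for this $x$ and \emph{any} $y$ there is some $z\in x$ with $|y\cap z|\neq1$, and for that $z$ the formula $|y\cap z|=1$ is \emph{unrealizable}: if $y\cap z=\emptyset$ one cannot produce a witness $w\in y\cap z$, while if $y\cap z$ contains two distinct members $w_{0}\neq w_{1}$ then realizing the uniqueness clause would, at the second member, demand a realizer of the false atomic formula $w_{1}=w_{0}$. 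Hence $\forall z\in x\,(|y\cap z|=1)$ is unrealizable for every $y$, whereas $\emptyset\notin x$ \emph{is} realizable; so no program can turn a realizer of the antecedent into a realizer of the consequent, $\exists y(\dots)$ is unrealizable for this $x$, and the universal statement fails.

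The well-ordering principle is the delicate case, and I expect it to be the main obstacle. The natural attempt mirrors the choice-function argument: output the code-induced ordering $R$ on $x$, whose linearity clause is $\Delta_{0}$ and immediately realized. The difficulty is the well-foundedness clause $\forall S\,(\emptyset\neq S\subseteq x\to\exists m\in S\,\forall t\in S\,\neg(t\mathbin{R}m))$, whose leading universal ranges over \emph{all} subsets of $x$ rather than over a set bounded by $c(x)$. Producing, uniformly in an arbitrary code $c(S)$, the $R$-least element of $S$ appears to be within reach (locate the members of $S$, compute their $c(x)$-indices by deciding equality against $c(x)$, and minimise), which is precisely why a direct cardinality obstruction of the kind used for POT is not visible here, and why one must pinpoint where realizability genuinely breaks. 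I would therefore attack the non-realizability by seeking a set $x$ for which the \emph{uniform} realization of this unbounded subset-quantifier, with a fixed program and ordinal parameter, lets an OTM compute an object whose code has strictly larger cardinality than the input, contradicting the theorem that OTM-computations cannot raise cardinals; failing that, I would reexamine whether the intended notion is the absolute variant, under which the code-induced well-ordering is no longer available and the argument should go through cleanly.
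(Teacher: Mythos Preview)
For the second claim you build $f$ from the least-index trick on $c(x)$, which works, but the paper's point is more direct and closer to the spirit of realizability: a realizer of $\forall y\in x\,\exists z\in y\,(z=z)$ is already a program that, on input $c(y)$ for $y\in x$, outputs some $z\in y$; running it over the elements of $x$ and collecting the pairs $(y,z)$ gives $f$ without any appeal to code-induced orderings. This is what ``easy consequence of the interpretation of implication'' means here, exactly as in the paper's treatment of Collection. Your argument for the first form is correct for the formula as written---the missing disjointness hypothesis makes the sentence classically false, and the triangle example does the job---though the paper itself defers this claim to \cite{GenEffRed}, which presumably handles the intended disjoint version; so you may be exploiting a typo rather than the genuine obstruction.

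The real gap is the well-ordering principle. You correctly locate the difficulty---the code-induced order on $x$ is a natural candidate, and the issue is whether well-foundedness, with its unbounded $\forall S$, can be realized uniformly by a fixed $(P,\alpha)$---but you do not resolve it. Proposing to ``seek'' a cardinality-raising contradiction, and then offering as a fallback that perhaps only the absolute notion is intended, is a description of where to look, not a proof. As it stands the third claim is unproved in your proposal; the paper does not argue it in the text either, but defers to \cite{GenEffRed}.
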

\begin{proof}
	For the first and the last claim, see \cite{GenEffRed}. The second claim again is an easy consequence of the interpratation of implication.
\end{proof}

\bigskip
\textbf{Note}: Friedman's IZF has the standard axioms of ZF; saw that not all of them are OTM-realizable. Aczel's CZF has the axioms of extensionality, pairing, union, empty set, infinity, bounded separation, strong collection, the subset collection scheme and the axiom of regularity. Of these, only the subset collection scheme is not OTM-realizable. 
All axioms of Kripke-Platek set theory (KP) are OTM-realizable.

\section{Further Work}

We have only begun to explore connections between notions of generalized effectivity and constructive set theory. In particular, absolute OTM-realizability is not explored in this note. Moreover, one could base the realizability interpretation on other notions of generalized effectivity; for example, in \cite{EffField}, Hodges uses primitive recursive set functions, which has the advantage to avoid the intermediate encoding via ordinals. It would be interesting to see how stable the above results are under various changes of the underlying formalism. A somewhat more ambitous goal would be to axiomatize the (absolutely) OTM-realizable statements under intuitionistic provability. 
Moreover, we will explore in further work the connections between generalized effectivity and idealized agency of mathematics, such as advocated by P. Kitcher (\cite{NatMathKnow}).

\end{document}